\DeclareMathOperator{\ring}{\widehat{QH}( \mathbb{CP} ^{\infty})}
\DeclareMathOperator{\qmaps}{\overline{\Omega}}
\DeclareMathOperator{\maps}{{\mathcal {P}}}
\newtheorem {theorem} {Theorem} [section]
\newtheorem {question}  [theorem] {Question}
\newtheorem {definition} [theorem] {Definition}
\newtheorem {notation}[theorem] {Notation}
\newtheorem {remark} [theorem] {Remark}
\newtheorem {rem/q}[theorem]{Remark/Question}
\numberwithin {equation} {section}
\author{Yasha Savelyev} 
\address{Centre de Recherches Math\'ematiques, Universit\'e de Montr\'eal, C.P. 6128, Succ. Centre-ville, Montr\'eal H3C 3J7, Qu\'ebec, Canada}
\email{savelyev@crm.umontreal.ca}
\title {On configuration spaces of stable maps}
\begin{document} 
 \maketitle
 \footnote{2010 Mathematics Subject Classification: 53D45, 57R19}  
 \begin{abstract} We study here some aspects of the topology of the space of smooth, stable, genus 0 curves in a Riemannian
 manifold $X$, i.e. the Kontsevich stable curves, which are not necessarily
 holomorphic. We use the Hofer-Wysocki-Zehnder polyfold structure on  this space and some natural
characteristic classes, to show that for $X=BU,$ the rational homology of the
spherical mapping space injects into the rational homology of the space of
stable curves. We also give here a definition of what we call $q$-complete
symplectic manifolds, which roughly speaking means Gromov-Witten theory captures all information about homology of the space of
smooth stable maps. 
\end{abstract}
\section {Introduction} 
The space of smooth stable curves (unparametrized stable maps) of a Riemann
surface into a Riemannian manifold appears naturally in the context Gromov-Witten theory in symplectic geometry,
particularly in the context of the beautiful polyfold approach of
Hofer-Wysocki-Zehnder \cite{Hofer.GW}.  

The topology of the configuration space of stable curves in a general
Riemannian manifold seems very interesting on its own merit. For example  we
show that the space of  based stable curves  has the structure of an $H$-space.
This is interesting as the space of unparametrized based spheres in $X$ does not have an $H$-space
structure.

Moreover, this configuration space may also be very natural in the study of
gradient flow for the energy functional on the space of smooth maps of say a
Riemann  sphere into a Riemannian manifold $X$. It was first observed by
Sacks-Uhlenbeck \cite{Uhlenbeck} that the flow lines of the resulting parabolic
flow often do not converge to smooth maps but rather the associated maps develop bubbling phenomena. This of course presents problems for Morse theory
considerations. For example, Eells and Wood \cite{minimal} show that in a
simply connected Kahler manifold $X$, the only critical points of the energy functional on the mapping space of a Riemann
sphere are (anti)-holomorphic maps, which are also absolutely energy
minimizing. If Morse theory worked as expected we could conclude that the topology of the
mapping space coincides with the topology of the space of absolute minima, i.e.
with the space  of (anti)-holomorphic maps, which is usually the wrong
conclusion. However in a series of remarkable papers \cite{SJC}, \cite{Guest1}, \cite{Guest2},
\cite{Kirwan}, \cite{Gravesen1} (sorry for incomplete list) it is shown that the
conclusion becomes essentially correct after a suitable process of stabilization. One possibility for treating or at least understanding this
problem is to partially compactify the space by adding all appropriate stable
maps, in such a way that bubbling becomes built in. One may then hope that the gradient flow on the enlarged space satisfies
some version of Palais-Smale condition, after appropriate completion. It would
be most exciting to see if the polyfold theory helps in this.

 Our main result concerns injectivity of the map from rational homology of the
spherical mapping space  into the rational homology of the space of
stable curves in the case of $X=BU (r)$. This at least tells us the space of
stable curves in that case is topologically non-trivial. Although we are far
from understanding the topology of this space. 

Finally, we define a notion of $q$-complete symplectic manifolds, inspired by
work Cohen-Segal-Jones \cite{SJC} studying the spherical mapping space into an
almost Kahler manifold by means of certain stabilization of holomorphic spherical
mapping spaces. 
\subsection {The space of smooth stable maps} \label {section.stable.maps}
Let $\maps _{0,n} ^{A} X$
denote the space whose elements are continuous maps into a 
Riemannian manifold $X,g$, with total homology class $A$, from a nodal connected
Riemann surface $\Sigma$ with genus 0, with complex structure $j$ and $n$
marked points. These maps are assumed smooth on
each component of $\Sigma$, and satisfy some additional conditions:
\begin{itemize} 
  \item If
the restriction $u _{\alpha}$ of $u$ to a component $\alpha$ is non-constant
then $u _{\alpha}$ is not null-homologous.
\item  If $u _{\alpha}$ is constant  then the number of
special points is at least 3. 
\end{itemize}
 We will distinguish one marked point by $z_0$. A pair of stable
maps $$u_1: (\Sigma_1, j_1, \{z_i ^{1}\}) \to X, u_2: (\Sigma_2,j_2, \{z_i
^{2}\}) \to X$$ are \emph{equivalent} if there is a continuous map $\phi:
\Sigma_1 \to \Sigma_2$, which is a diffeomorphism on each component, 
satisfies $\phi ^{*} j_2 = j_1$ maps marked points to marked points and
satisfies $u_2 (z_i^2) = u_1 (z_i^1)$. The quotient by the equivalence relation
will be  denoted by $\qmaps ^{A}_n X$, and its elements are called stable
curves in $X$. 

The topology on $\maps
^{A} _{0,n} X$ we consider is the standard Gromov topology, (taking into account
energy of maps for the metric $g$) and the arguments of \cite[Chapter 5]{MS} can be
adopted to show that this induces a metrizable topology on $\qmaps_n ^{A}$.
From now on we will suppress mention of the axillary
metric $g$, since topology of our spaces is independent of $ g$. This topology
should be equivalent to the topology considered in \cite[Section 3.2]{Hofer.GW}.
(At any rate the topology in \cite{Hofer.GW} is the one we  need for Theorem
\ref{thm.inj.BU}.) 

 \begin{remark} Note that under these conditions for a compact Riemannain
 manifold $X$ a bound on Riemannian energy of $u$ gives a bound on the number of
 smooth components. As by assumption that each $u _{\alpha}$ is not null-homologous, 
 it's area and consequently energy is bounded
by some constant $\hbar_g$ depending only on $g$. This is an elementary
application of the main compactness theorem for rectifiable currents in
geometric measure theory, \cite{GMT}. This brings up a question: are closed
bounded subsets in $\qmaps
^{A} _{n} X$ compact for a compact $X$? 
\end{remark}


Set $$ \qmaps ^{A} X = colim \qmaps_n ^{A} X,$$ the  colimit with
the maps in the  directed system \begin{equation} i_n:
\qmaps_n \to \qmaps _{n+1}
\end{equation}
 defined as follows. For $u \in \maps _{0,n} ^{A} X$, $u: (\Sigma, j, \{z_i\}) \to
 X$, $i_n (u)$ is an equivalence class of a map $u'$ from $$\Sigma' = \Sigma
 \sqcup (\mathbb{CP} ^{1}, 0, z'_1, z'_2)/{z_0 \sim z_1'}.$$  The map $i _{n}
 (u)$ is $u$ on the component $\Sigma$ and constant on the new component.  The new
distinguished marked point $z'_0$ on $\Sigma'$ is the marked point $0$.

 It will later be necessary to remember  more structure and
consider  the spaces $\qmaps_n X$ as  polyfolds, which is an orbifold version
of M-polyfolds,  or with even more structure as  polyfold groupoids. In this
case instead of saying that stable maps are smooth, we should instead require
some regularity on the underlying continuous maps, for details see
\cite[Section 1.1]{Hofer.GW}. 

 For the
definition and construction of these structures on spaces of stable maps of the
kind we use, the reader is referred to Hofer-Wisocki-Zehnder \cite{Hofer.GW}.
Although \cite{Hofer.GW}  constructs polyfold structures on spaces of 
stable maps in a symplectic manifold, the construction clearly works word for
word in the above setting. Although it might be at the moment mysterious why it is interesting. The present note is an attempt to explain
this. 
\subsubsection {Product operation}
 Let $$u _{1}, u _{2}: (\Sigma_1, z_0^{1}), ( \Sigma_2, z_0^2) \to X$$
be representatives for a pair of elements $|u_i|$ in $\qmaps _{n} X$,
respectively $\qmaps _{m} X$ such that $u_1 (z_0 ^{1}) = u_2 (z_0 ^{2})$. Then
we have a product $|u _{1}| \star |u _{2}| \in \qmaps _{n+m-1} X$  defined as 
an equivalence class of a map from \begin{equation*} \Sigma' = \Sigma_1 \sqcup
\Sigma_2 \sqcup (\mathbb{CP} ^{1}, 0, z'_1,z'_2)/ z_0 ^{1} \sim z_1', z_0 ^{2} \sim z_2',
\end{equation*}
which is $u_1$ respectively $u_2$ on the components $\Sigma_1$, respectively
$\Sigma_2$ and is constant on the new $ \mathbb{CP} ^{1}$ component. The new
distinguished marked point for $\Sigma'$ is  $0$. In other words we concatenate
$u_1, u_2$ with a ghost bubble as intermediary. 

This multiplication is homotopy associative, since
the main ghost components of domains for  $(u_1 \star u_2) \star u_3$, and $u_1
\star (u_2\star u_3)$, where concatenation takes place correspond to a pair of
points in $ \overline{M} _{0,4}$, which we may connect by a path. In other words
it is conceptually the same argument as the argument for associativity of
quantum multiplications. Also the maps 
\begin{align*} \qmaps _{n} X \times \qmaps_m X \to \qmaps _{n+m-1} X
\xrightarrow{i _{n+m} \circ i _{n+m-1}} \qmaps _{n+m+1} X\\
\qmaps _{n} X \times \qmaps _{m} X \xrightarrow{i _{n} \times i _{m}} \qmaps
_{n+1} X \times \qmaps  _{m+1} X \to \qmaps _{n+m+1} X, 
\end{align*}
are homotopy equivalent by a similar argument. Consequently there is
 an induced  map in the homotopy
category $\qmaps X \times \qmaps X \to \qmaps X$. 

 If we ask that our maps $u$ are based, i.e. map the
distinguished marked point $z_0$ to $x_0 \in X$, then the corresponding space
$\qmaps _{x_0} X $ is a homotopy associative H-space. Consequently homology
of $\qmaps _{x_0} X$ is a ring with Pontryagin product.
 \begin{notation} From
now on we will be in the above based situation and the subscript $x_0$ in $\qmaps _{x_0}$ will be dropped. 
\end{notation}
Our main observation in this paper is this:
\begin{theorem} \label{thm.inj.BU} The natural map $H_*(\Omega ^{2} BSU (r),
\mathbb{Q}) \to Cob_* ^{orb}(\qmaps BSU (r), \mathbb{Q})$, is injective for $*
\leq 2r-2$.
\end{theorem} 
On the right hand side we have orbifold bordism
groups which are to be defined.
\subsection {Complete symplectic manifolds}
Suppose now $(X, \omega)$ is a symplectic manifold.
%
Let $a_i: D _{i} \to X$, $a _{\xi}: D _{\xi} \to \overline M _{0,n}$ be
smooth maps of closed oriented smooth manifolds, with $\overline M _{0,n}$
denoting the moduli space of stable genus 0 Riemann surfaces with $n$ marked
points.

 Under suitable conditions, for example if
$(X, \omega)$ is semi-positive we have natural cycles $gw: \overline{\mathcal {M}} _{n} (A, \{a_i\}, a_{\xi}) \to
 \qmaps X$, defined as follows. Consider the diagram below:
 \begin{equation} \xymatrix{  & (\prod _{i}
D_i) \times D _{\xi} \ar[d] ^{prod} \\ \overline{\mathcal {M}} _{0,n} ^{A} (X,J)
\ar[r] & X ^{n} \times \overline{M} _{0,n}, }
\end{equation}
with $ \overline {\mathcal {M}} _{0,n} ^{A} (X,J)$ denoting the compactified
moduli space of genus zero, class $A$, $J$-holomorphic curves in $X$, for a
regular $\omega$-tamed $J$. After perturbing the maps to be transverse, we
define $\overline{\mathcal {M}} _{n} ^{A}(X, \{a_i\}, \xi)$ as the pull-back of this diagram (oriented fibre product) and the cycle $gw$
is defined to be the composition of the projection of $\overline{\mathcal {M}}
_{n} ^{A}(X, \{a_i\}, \xi)$ to $ \overline {\mathcal {M}} _{0,n} ^{A} (X;J)$, with the tautological map
\begin{equation*}  \overline {\mathcal {M}} _{0,n} ^{A} (X,J) \to \qmaps X.
\end{equation*} 
 For a completely general symplectic manifold $(M,
\omega)$ the homology class of the cycle $gw$ is defined via the homology
pushforward of the orbifold virtual fundamental class of $\overline{\mathcal {M}} _{n} (A, \{a_i\}, \xi)$. We shall call  cycles
$gw$: \emph { \textbf{Gromov-Witten}} cycles. We will also call all 
0-dimensional cycles into $\qmaps X$ Gromov-Witten cycles. 

One of the motivations we
had for undertaking study of configuration space of smooth stable maps, is  so
we could make the following definition, we say more in the remark below. \begin{definition} We
will say that a symplectic manifold $ (X, \omega)$ is \emph { \textbf{q-complete}} if homology
of $\qmaps _{} X$ is multiplicatively generated over $ \mathbb{Q}$ by homology
classes of Gromov-Witten cycles.
\end{definition}
\begin{remark} This is  the
homological version of homotopy approximation of $\Omega ^{2} X$ by 
holomorphic mapping spaces  from $ \mathbb{CP} ^{1}$, which was studied for
example in \cite{SJC}, \cite{Guest1}, \cite{Guest2}, \cite{Kirwan},
\cite{Gravesen1}. Given a symplectic manifold $X,\omega$, the
basic question is when does \begin{equation} \label {eq.limit} \Omega ^{2} X
\simeq Hol^+  ( \mathbb{CP} ^{1}, X, \omega, j_X)),
\end{equation}
where $Hol ^+$ denotes the group completion of the topological monoid of based
$j _{X}$-holomorphic maps of $ \mathbb{CP} ^{1}$ into $X$, under the gluing
operation.
 Remarkably, this is known to be the case for
example, for complex projective spaces, generalized flag manifolds and toric manifolds. Unfortunately \eqref{eq.limit} 
only makes sense for a fixed complex structure $j_X$ i.e. it is
a priori not a symplectic property. We wanted a purely symplectic notion, and
$q$-complete symplectic manifolds is one possibility.
%
\end{remark}
\begin{question} Is $ \mathbb{CP} ^{n}, \omega_{st}$  q-complete?
\end{question}
This appears to be at the moment a difficult and interesting question. Some
interesting and possibly related work is done by Miller \cite{miller}. 
 \subsection* {Acknowledgements} I am grateful to Weimin
Chen and Dusa McDuff for discussions.
\section {Main argument} \label{sect.main}
\subsection {Quantum classes}
We are going to use the basic notation and definitions of \cite{BP}. 
We will be concerned with stable quantum classes, which
are classes
\begin{equation}  \label {eq.qclasses}
qc ^{\infty} _{k} \in H ^{2k} (\Omega ^{2} BSU (r) \simeq \Omega SU (r),
QH ( \mathbb{CP} ^{r-1})),
\end{equation}
with $2k \leq 2r -2$. The quantum homology ring $QH ( \mathbb{CP} ^{r-1})$ is
taken with $ \mathbb{Q}$ coefficients, so that additively it is just the
rational homology of $ \mathbb{CP} ^{r-1}$. 
 The above classes are
interesting in the present context because they canonically ``extend'' to
$\qmaps  BSU (r)$, while such an extension is not apparent for Chern classes
especially since they are not \emph{intrinsically} defined on $\Omega ^{2} BSU
(r)$. Extend here means that they are pull-backs of some orbifold cohomology
classes on $\qmaps BSU (r)$, via  the natural map $\Omega ^{2} BSU (r)\to \qmaps
BSU (r)$. Although orbifold cohomology groups for us will just be the space of
$\ring$ valued linear functionals on certain orbifold bordism groups $Cob_* 
^{orb}(\qmaps BSU (r))$,  which contain too much  information to be really
practical, and in principle it would be good to cut this down, but it is not
obvious how to do this in a way that still allows definition of our classes. In particular it is at the moment unclear  how and
 if these are related to Chen-Ruan orbifold cohomology groups (with suitable
 coefficients).
A map $X \to \qmaps BSU (r)$ by definition factors through a map $X \to \qmaps
_{n} BSU (r)$ for some $n$, composed with the universal map  $$\qmaps
_{n} BSU (r)  \to \qmaps BSU = colim _{n} \qmaps _{n} BSU (r).$$
Consequently, we may define a map from a smooth manifold $f: X \to \qmaps BSU
(r)$ as sc-smooth if factors through an sc-smooth map to $\qmaps _{n} BSU (r)$.
Note that $BU (r) = \lim _{n \mapsto \infty} Gr _{ \mathbb{C}} (r, \mathbb{C}
^{n}) $, and we of course only need a polyfold structure on $ \qmaps _{n} Gr _{ \mathbb{C}} (r, \mathbb{C}
^{n}) $ for every $n$, for all our arguments. To simplify notation we will
still just refer $ \qmaps BSU (r)$ as a polyfold, although this of course only
means that it is a direct limit of polyfolds. 

 The group $Cob _{k} ^{orb} (\qmaps BSU (r))$, is the group of
equivalence classes of sc-smooth orbifold maps  $f: X ^{k} \to \qmaps BSU(r)$,
with $X ^{k}$ closed oriented smooth orbifold with corners, with
composition given by disjoint union, where the equivalence relation is $(f _{1}, X_1) \sim (f _{2}, X _{2})$ if there
is an sc-smooth map of an orbifold with boundary:
\begin{equation*} F: B ^{k+1} \to \qmaps BSU (r),
\end{equation*}
with $(\partial F, \partial B) = (f_1, X_1 ^{op}) \sqcup (f_2, X _{2})$, where
$X _{1} ^{op}$ denotes $X _{1}$ with the opposite orientation. 
 From the groupoid point of view of orbifolds  an
orbifold $X$ is some coarse equivalence class of an \`{e}tale, proper, stable,
smooth groupoid, also known as orbifold groupoid, $\mathcal {X}$, see for
example \cite{dusa.orb}. 
The orbifold map $f$ above is determined by data of an sc-smooth functor $
\widetilde{f}: \mathcal {X} \to \qmaps BSU (r)$, with the right side considered
as a polyfold groupoid, also called ep-groupoid, but we only use the former name. Let
$E \to BSU (r)$ denote the projectivization of the universal $ \mathbb{C}
^{r}$-bundle. 

We have a natural polyfold groupoid fibration $\mathcal {E} \to\qmaps BSU (r)$, 
with fiber over $x \in \qmaps BSU (r)$, $[x: \Sigma _{x} \to BSU (r)]$ the space
of  ``stable sections" i.e. section class stable maps $\sigma: \Sigma'_x \to E_x=x ^{*} E$.
These are defined as follows:   $\Sigma' _{x}$ has some components labeled as
principal, and some components labeled as vertical. The principal components of
$\Sigma'_x$ are identified with components of $\Sigma _{x}$ and $ \sigma$ is a
``smooth'' (continuous section with appropriate regularity
\cite[Section 1.1]{Hofer.GW}) section of $E_x$ over these components, while the
vertical components of $\Sigma'_x$ are mapped into the fibers of $ E_x$. 
The restriction of $\sigma$ to the collection of all vertical components is required to be a stable map as defined in Section \ref{section.stable.maps}. 

 This definition requires a bit more explanation as
$x$ is only some equivalence class of a map. To this end consider the large category
 $ \mathcal {P} ^{A} _{0,n} BSU (r)$ with morphisms  given by label preserving
 reparametrizations, (see Section \ref{section.stable.maps}).  Then clearly we have an analogously defined
fibration of topological categories $ \mathcal {E} \to \mathcal {P} ^{A} _{0,n}
BSU (r)$  (and now there is no ambiguity in the definition). On the other hand
the groupoid $ \qmaps BSU (r)$ is actually constructed as a refinement of $\mathcal {P} ^{A} _{0,n} BSU (r)$ and this
refinement also refines the above categorical fibration to a polyfold groupoid
fibration  $pr: \mathcal {E} \to\qmaps BSU (r)$. In what follows we keep
applying this implicit understanding. 

Fix a unitary (in other words $ PU (r)$) connection $ \mathcal {A}$ on $E \to
BSU (r)$. Then the restriction $ \mathcal {A} _{x}$ of $ \mathcal {A}$ to $E_x = x ^{*} E$
 induces  almost complex structures $  \{J_x\}$   in the following
 standard way.
\begin{itemize} 
  \item The natural map 
$\pi: (E_x, J _{ x})
\to (\Sigma _{x}, j_x)$ is holomorphic.  
\item  $J _{ x}$ preserves the horizontal subbundle of $TE _{x}$
induced by $ \mathcal {A}_x$.
\item $J _{x}$ preserves the vertical tangent bundle $T ^{vert} E _{x}$ of $
\mathbb{CP} ^{r-1} \hookrightarrow E_x \to \Sigma _{x}$, and
restricts to the standard complex structure on the fibers $ \mathbb{CP} ^{r-1}$.
(That is to say the fibers are identified with $ \mathbb{CP} ^{r-1}$ up to
action of $PU (r)$, which preserves this complex structure.)
\end{itemize}

Let $ \widetilde{ \mathcal {E}}$ denote the pull-back of $ \mathcal {E}$ to $
\mathcal {X}$, via $ \widetilde{f}: \mathcal {X} \to \qmaps BSU (r)$.  Then
over $ \widetilde{ \mathcal {E}}$ we have a natural strong Polyfold Banach
bundle $ \mathcal {W}$. For an element $\sigma \in E _{x}$, $\sigma: \Sigma'
_{x} \to E _{x}$, the fiber over
$\sigma$ consists (after appropriate completion) of the space of continuous,
and smooth over smooth components $J _{x}$ anti complex linear 1-forms on
$\Sigma'_x$ with values in $\sigma ^{*} T ^{vert} E _{x}$.
By essentially identical arguments to \cite[Section 1.2]{Hofer.GW} over the
whole $ \mathcal {E}$ this can be given the structure of a strong polyfold Banach bundle. 
 And  we have a Fredholm sc-section of $ \mathcal {W}$: the Cauchy-Riemann
 section, by taking the $J_x$ anti-linear part of the differential of $\sigma=
 (\Sigma'_x \to E _{x})$ as a map into $\sigma ^{*} T ^{vert} E _{x}$.  
 We finally define our orbifold cohomology
classes  $$qc_k \in H ^{k} (\qmaps BSU (r), QH ( \mathbb{CP} ^{r-1})),$$ 
which to remind the reader for us are just $QH ( \mathbb{CP} ^{r-1})$ valued
linear functionals on $Cob _{k} ^{orb} (\qmaps BSU (r))$. We will restrict our
discussion to the identity component of $ \qmaps BSU (r)$. We need
to say how to compute 
\begin{equation} \label {eq.correlator} \langle qc _{k}, [ \widetilde{f}]
\rangle,
\end{equation}
where $ \widetilde{f}$ denotes the functor $ \widetilde{f}: \mathcal {X} \to
\qmaps BSU (r)$ lifting the data of an sc-smooth orbifold map $f: X \to \qmaps
BSU (r)$.

Recall that each $\Sigma _{x}$ comes with a distinguished marked point $z_0$,
which is mapped to a fixed  base point in $BSU (r)$, (recall definition of
$\qmaps BSU (r)$). Consequently, we have a smooth family of embeddings $I_x: 
\mathbb{CP} ^{r-1} \to E _{x}$, which takes $\mathbb{CP} ^{r-1}$ to the
fiber of $E _{x}$ over $z_0$.  Let
$ \overline{ \mathcal {M}} ( \mathcal {P}, [ \mathbb{CP} ^{l}], d)$ denote the
polyfold groupoid consisting  of pairs $
(\sigma,x)$, $x \in \mathcal {X}$ with $\sigma \in \widetilde{ \mathcal {E}}$ in
the 0-set of the above constructed Cauchy-Riemann section, whose total
degree is $d$, and which intersects $I_x ( \mathbb{CP} ^{l})$. Where $d$
 here is defined as 
 \begin{equation*} \langle c _{1} (T ^{vert} E _{x}), \sigma \rangle \cdot
 \frac{1}{r}.
\end{equation*}
 This is  an integer, since by assumption  $ \widetilde{f}$ maps to
 the identity component of $ \qmaps BU (r)$ which implies that  $P _{x}  \simeq
 \mathbb{CP} ^{r-1} \times \Sigma _{x}$, as a topological bundle.  But then the
 vertical Chern number of a section is $n$ times the degree of the
 projection of the section to $ \mathbb{CP} ^{r-1}$.

The virtual dimension of $\overline{ \mathcal
{M}} ( \mathcal {P}, [ \mathbb{CP} ^{l}], d)$, is $2 d r + 2k + 2l $.
Note that if $0<2k \leq 2r-2$, then
 \begin{equation*}  2 dr + 2k + 2l <0 \text { unless $d \geq -1$}.
\end{equation*} 
On the other hand $d> 0$ results in too high virtual dimension, and $d=0$ only
contributes to degree 0 class as we will see below. 
Under this condition definition of quantum classes is particularly simple:
 \begin{equation*} \langle qc
_{k}, [ \widetilde{f}] \rangle = b  \in QH ( \mathbb{CP} ^{r-1}),
\end{equation*}
where $b \in H_* ( \mathbb{CP} ^{r-1}, \mathbb{Q}) $ is defined by duality:
\begin{equation*} b \cdot [\mathbb{CP} ^{l}] = \# \overline{ \mathcal {M}} (
\mathcal {P}, [ \mathbb{CP} ^{l}], -1) \in\mathbb{Q},
\end{equation*}
where the left side is the usual intersection product and the right side is the
orbifold Gromov-Witten invariant counting elements of $  \overline{ \mathcal {M}} ( \mathcal {P}, [
\mathbb{CP} ^{l}], -r)$, which is zero unless the expected dimension is zero. 
Of course we also have to show
that our definition of $qc_k$ is well defined, i.e. that \eqref{eq.correlator}
is independent of the choice of representative $ \widetilde{f}$. This is worked
out in \cite{GS}, and readily generalizes to
polyfold setting, as it is just the usual cobordism of moduli space argument,
similarly with independence of choices of almost complex structures,
(connections).

 The pull-back of the above defined classes to $\Omega ^{2} BSU (n)$ are
 exactly the stable quantum classes considered in \cite{BP}, except
 of course we did not need orbifold cycles but worked with cycles that are maps of closed
 oriented manifolds. 
  We paraphraze the main theorem of \cite{BP} as follows:
  \begin{theorem} [\cite {BP}] \label{thm.bp} If $2k \leq 2r-2$, then $0 \neq a
  \in H _{2k} (\Omega ^{2} BSU (r), \mathbb{Q})$ if and only if for some $
  \{\beta_i, \alpha_i\}$ \begin{equation*} 0 \neq \langle \prod _{i} qc
  _{\beta_{i}} ^{\alpha_i}, a \rangle, \text { where} \sum _{i} {2 \beta _{i}} \cdot {\alpha _{i}} = k.
\end{equation*}  
  \end {theorem}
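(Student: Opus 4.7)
The plan is to reduce the statement to a generation result: namely, that in the range $2k \leq 2n-2$, the quantum classes $\{qc_k\}$ generate $H^*(\Omega^2 BU(n); QH(\mathbb{CP}^{n-1}))$ as a $QH(\mathbb{CP}^{n-1})$-algebra. Once this is shown, the detection statement in the theorem follows from non-degeneracy of the cohomology--homology pairing on $BU(n)$ combined with freeness of $QH(\mathbb{CP}^{n-1})$ as a $\mathbb{Q}$-module: $a \neq 0$ if and only if some monomial in the $qc_k$'s pairs non-trivially with it.

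First I would compute $qc_k$ explicitly on a cycle $[\widetilde{f}]$ by decomposing the moduli space $\overline{\mathcal{M}}(\mathcal{P}, [\mathbb{CP}^l], d)$ according to section degree $d$. Under the stability assumption $2k \leq 2n-2$, the virtual dimension count $-2dr + 2k + 2l$ forces $d \in \{0,1\}$, so only these two strata contribute. The $d=0$ stratum consists of constant sections, each identified with a point in the fiber $\mathbb{CP}^{r-1}$ over a fixed $z \in \Sigma_x$; the orbifold virtual Euler number of the obstruction bundle here is a purely classical intersection computation, and unpacking the definition it recovers the ordinary Chern class $c_k$ of the universal bundle, via the splitting principle and the Grothendieck characterization.

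The $d=1$ contribution then gives a correction term carrying a strictly higher power of the Novikov variable $q \in QH(\mathbb{CP}^{n-1})$, so that schematically
\begin{equation*}
qc_k \;=\; c_k \cdot 1 \;+\; q \cdot (\text{correction term of lower cohomological degree})
\end{equation*}
in $H^*(\Omega^2 BU(n); QH(\mathbb{CP}^{n-1}))$. Since the ordinary Chern classes $c_1,\ldots,c_n$ freely generate $H^*(BU(n);\mathbb{Q})$ as a polynomial algebra, an upper-triangular argument in the Novikov filtration (the $q$-degree) shows that the $qc_k$'s also form a free system of multiplicative generators, from which the theorem follows by duality.

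The main obstacle will be the rigorous identification of the $d=0$ contribution with the ordinary Chern class $c_k$: this requires a careful analysis of the polyfold groupoid obstruction bundle $\mathcal{O}(l,0)$ and verification that its orbifold virtual Euler number agrees with the classical Chern intersection product on $BU(n)$, where one has to reconcile the polyfold-theoretic transversality setup with classical bundle-theoretic formulas. A secondary but essential point is ensuring that the $d=1$ quantum correction cannot accidentally cancel the leading term on a cohomology basis, which should follow from an energy / Novikov-degree filtration argument, guaranteeing genuine upper-triangularity.
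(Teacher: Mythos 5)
First, a point of orientation: the paper does not prove this statement at all --- it is explicitly imported from \cite{BP} (``we paraphrase the main theorem of \cite{BP}''), and the only indication of method given is that the classes $qc_k$ are identified with pullbacks of abstract classes $qc^{\infty}_k$ on $\Omega^2 BU \simeq BU \times \mathbb{Z}$ via Bott periodicity and then computed by the splitting principle, using the fact that they satisfy the formal Chern-class axioms. Your route --- decompose the moduli space by section degree, identify the lowest Novikov-order term of $qc_k$ with the classical Chern class $c_k$, and run an upper-triangularity argument --- is therefore genuinely different from the strategy the paper points to. If your central identity $qc_k = c_k\cdot 1 + q\cdot(\text{corrections})$ were established, the rest of your argument (polynomial generation by $c_1,\dots,c_n$, non-degeneracy of the pairing, non-cancellation by looking at the lowest $q$-order term) would indeed deliver the theorem, and the ``only if'' direction is trivial in any case.

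The genuine gap is that this central identity is asserted rather than proved, and as stated it conflicts with the definitions in the paper. The pairing $\langle qc_k, [\widetilde f]\rangle$ is defined here \emph{only} through the degree-one moduli space, via $b\cdot[\mathbb{CP}^l] = e(\mathcal{O}(l,1))$; the $d=0$ stratum you invoke has virtual dimension $2k+2l>0$ (except in the trivial case $k=l=0$) and hence contributes zero under the stated dimension convention --- it is not where a copy of $c_k$ can be read off. So the ``classical part'' of $qc_k$, if it equals $c_k$ at all, must be extracted from the degree-one section count itself, and that identification is precisely the computational content of \cite{BP}; it is not a formal consequence of the setup. The paper's own remark that the argument ``does require that we pass to the infinite unitary group'' and relies on Bott periodicity is a warning sign that a direct finite-rank computation of the kind you propose is not known to be available. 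In short, you have deferred essentially the entire theorem to your ``main obstacle,'' and the mechanism you propose for resolving it (reading $c_k$ off the constant sections) does not work with the definition of $qc_k$ used here.
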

\begin{proof}[Proof of Theorem \ref{thm.inj.BU}] By Milnor-Moore,
Cartan-Serre theorems the rational homology of $\Omega ^{2} BSU (r) \simeq
\Omega SU (r)$ is multiplicatively generated  by spherical  
classes, in particular by maps of smooth manifolds.  Consequently the theorem
follows immediately by Theorem \ref{thm.bp} and by existence of extension of
classes $qc _{k}$ to $ \qmaps BSU (r)$ described above.
\end{proof}
\bibliographystyle{siam}  
\bibliography{link} 
\end{document}